\newcommand{\pa}[1]{\left(#1\right)}
\newcommand{\br}[1]{\left[#1\right]}
\newcommand{\cb}[1]{\left\{#1\right\}}
\newcommand{\abs}[1]{\left\lvert#1\right\rvert}
\newcommand{\nm}[1]{\left\|#1\right\|}
\newcommand{\on}[1]{\operatorname{#1}}
\theoremstyle{plain}
\newtheorem{prp}{Proposition}
\theoremstyle{remark}
\newtheorem{remark}{Remark}
\def\NAT@spacechar{~}
\title{The central limit theorem for a sequence of random processes with space varying long memory}
\author{Vaidotas Characiejus\thanks{Corresponding author.} }
\author{Alfredas Ra\v ckauskas}
\affil{Faculty of Mathematics and Informatics, Vilnius University, Naugarduko g. 24, 03225 Vilnius, Lithuania\\ (e-mail addresses: \href{mailto:vaidotas.characiejus@gmail.com}{vaidotas.characiejus@gmail.com}; \href{mailto:alfredas.rackauskas@mif.vu.lt}{alfredas.rackauskas@mif.vu.lt})}
\date{January 18, 2013}
\begin{document}

\maketitle

\begin{abstract}
\noindent In this paper we investigate a sequence of square integrable random processes with space varying memory. We establish sufficient conditions for the central limit theorem in the space $L^2(\mu)$ for the partial sums of the sequence of random processes with space varying long memory.  Of particular interest is a non-standard normalization of the partial sums in the central limit theorem.

\smallskip
\noindent\textbf{Keywords:} long memory, random processes, square integrable sample paths, central limit theorem, weak convergence.

\smallskip
\noindent\textbf{MSC:} 60F05, 60B12.
\end{abstract}

\section{Introduction}
Memory of  a second-order stationary sequence of random variables  $\cb{Y_k}$ is usually defined in terms of  the decay of  autocovariances $\on{Cov}\br{Y_0, Y_h}$. For example,  $\cb{Y_k}$ is said to have long memory if the series
\begin{equation*}
\sum_{h=0}^\infty\abs{\on{Cov}\br{Y_0,Y_h}}
\end{equation*}
diverges, while short memory of $\cb{Y_k}$ corresponds to the convergence of the series above. For a review of the notion of long memory, we refer to \citet{samorodnitsky2007}; for probabilistic foundations, statistical methods, and applications, we refer to \citet*{giraitis2012}, and \citet{beran1994}.

Interesting and important  features of memory are reflected in the growth rate of the partial sums $\sum_{k=1}^nY_k$. The partial sums $\sum_{k=1}^nY_k$ of a short memory sequence of random variables appear to grow at the rate of the central limit theorem $n^{1/2}$, whereas the partial sums of a long memory sequence of random variables may grow faster. For example, if $\on{Cov}\br{Y_0, Y_h}\sim h^{-d}, 0<d<1$, then the partial sums $\sum_{k=1}^nY_k$  grow at the rate of $n^{1-d/2}$.

Memory of a sequence of multidimensional random elements may vary in space. That is, it can depend on the direction the sequence is projected. Memory of  a second-order stationary sequence of Hilbert space  valued random elements  $\cb{Y_k}$ can be associated with the regular decay of the autocovariance operators  $Q_h\mathrel{\mathop:}=\on{Cov}\br{Y_0, Y_h}$ by assuming, for instance, $Q_h \sim h^{-D}Q$, where $D$ and $Q$ are certain operators on the Hilbert space (see \citet{rackauskas2011} for further details). The corresponding sequence has space varying memory which depends on the operator $D$.

We investigate a sequence of random processes $\cb{X_k}\mathrel{\mathop:}=\cb{X_k(t),t\in S}$ which is defined for each $k\in\mathbb Z$ and each $t\in S$ by
\begin{equation*}
X_k(t)\mathrel{\mathop:}=\sum_{j=0}^\infty\pa{j+1}^{-d(t)}\varepsilon_{k-j}(t),
\end{equation*}
where $S$ is some index set, $\cb{\varepsilon_k(t)}$ is a sequence of independent and identically distributed random variables and $d(t)$ is a real function of $t$.

The sequence $\cb{X_k(t)}$ for each $t\in S$ is essentially similar to the fractional ARIMA$\pa{0,1-d(t),0}$ process, which may be expressed as an MA$\pa{\infty}$ process with the coefficients
\begin{equation*}
\frac{\Gamma\pa{j+1-d(t)}}{\Gamma\pa{j+1}\Gamma\pa{1-d(t)}},\quad j=0,1,2\ldots,
\end{equation*}
where $\Gamma(\cdot)$ is the gamma function (the fractional ARIMA process was introduced by \citet{granger1980} and \citet{hosking1981}). The application of Stirling's formula to the coefficients above yields the following relation:
\begin{equation*}
\frac{\Gamma\pa{j+1-d(t)}}{\Gamma\pa{j+1}\Gamma\pa{1-d(t)}}\sim\frac{j^{-d(t)}}{\Gamma\pa{1-d(t)}}\quad\text{as}\quad j\to\infty.
\end{equation*}

The growth rate of the partial sums $\sum_{k=1}^nX_k(t)$ depends on $t$. Interpreting $k\in\mathbb Z$ as a time index and $t\in S$ as a space index we thus have a sequence of real valued random processes $\cb{X_k}=\cb{X_k(t), t\in S}$ with space varying memory. Such sequences of random processes could serve as a model in functional data analysis (we refer to \citet{ramsay2005} for an introduction to functional data analysis, for the  theory of linear processes in function spaces, see \citet{bosq2000}).

We investigate the growth of the partial sums $\sum_{k=1}^n X_k$ in the sample path space of square integrable real valued functions and establish sufficient conditions for the central limit theorem for \mbox{the partial sums $\sum_{k=1}^nX_k$.}

Figure \ref{fig:sample paths} shows simulated sample paths of the random processes of the sequence $\cb{X_k}$. The sequence $\cb{\varepsilon_k}\mathrel{\mathop:}=\cb{\varepsilon_k(t):t\in[0,1]}$ was assumed to be a sequence of independent and identically distributed standard Wiener processes on the interval $[0,1]$ and the function $d:[0,1]\to(1/2,+\infty)$ was assumed to be a step function $d(t)=d_1\chi_{[0,1/2)}(t)+d_2\chi_{[1/2,1]}(t)$, where $\chi_A$ is the indicator function of $A$. The simulated sample paths for $5$ consecutive elements of the sequence $\cb{X_k}$ were plotted. The procedure was completed for two different sets of the values of $d_1$ and $d_2$ ($d_1=0.6, d_2=2$ and $d_1=0.6,d_2=0.7$).

\begin{figure}[h]
\label{fig:sample paths}
\centering
\includegraphics[width=\textwidth]{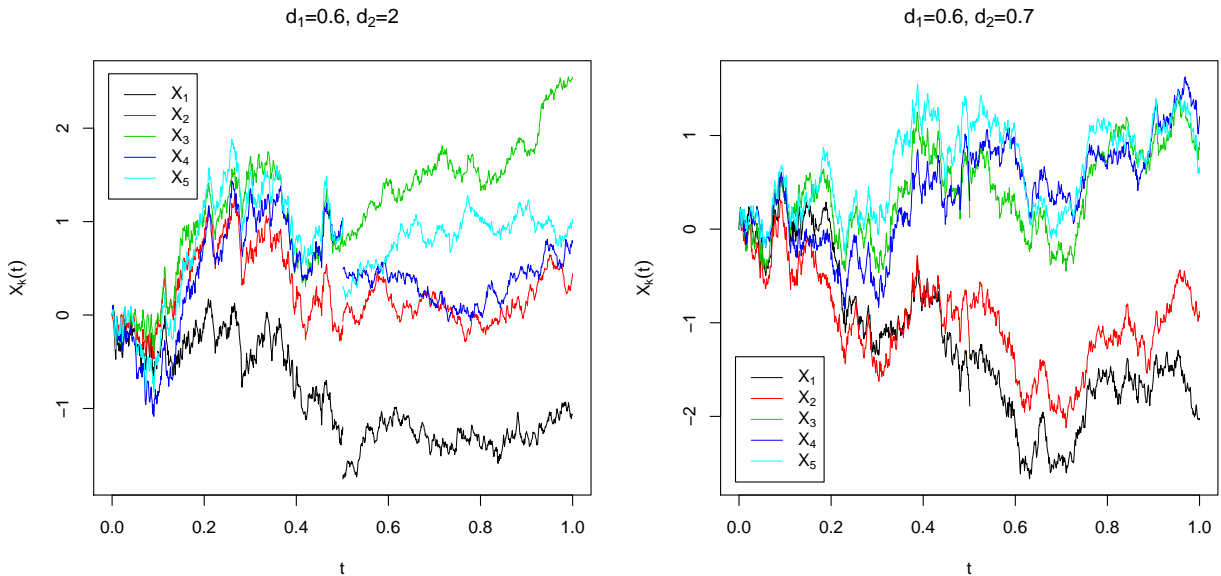}
\caption{Simulated sample paths of the random processes of the sequence $\cb{X_k}$ (horizontal axis is the index set $[0,1]$ of the random processes $\cb{X_k}$, vertical axis is the value of the random process $X_k(t)$ at a point $t\in[0,1]$)}
\end{figure}

The rest of the paper is organized as follows. The sequence of random processes $\cb{X_k}$ is defined and investigated in Section \ref{sec:sp}. In Section \ref{sec:clt} we establish sufficient conditions for the central limit theorem for the partial sums $\sum_{k=1}^nX_k$.

\section{Some preliminaries}\label{sec:sp}
Let $(S,\mathcal{S},\mu)$ be a $\sigma$-finite measure space. Consider a sequence of independent and identically distributed measurable random processes $\cb{\varepsilon_k}\mathrel{\mathop:}=\cb{\varepsilon_k(t): t\in S}$ defined on the same probability space $\pa{\Omega,\mathcal{F}, P}$ with $\on{E}\varepsilon_k(t)=0$ and $\on{E}\varepsilon_k^2(t)<\infty$ for each $t\in S$ and each $k\in\mathbb{Z}$. Let us denote $\sigma^2(t)\mathrel{\mathop:}=\on{E}\varepsilon_0^2(t)$ and $\sigma(s,t)\mathrel{\mathop:}=\on{E}\br{\varepsilon_0(s)\varepsilon_0(t)}$, where $s,t\in S$.

We define a sequence of random processes $\cb{X_k}\mathrel{\mathop:}=\cb{X_k(t),t\in S}$ by setting for each $t\in S$ and each $k\in\mathbb{Z}$
\begin{equation}
\label{eq:series of rv}
X_k(t)\mathrel{\mathop:}=\sum_{j=0}^\infty\pa{j+1}^{-d(t)}\varepsilon_{k-j}(t),
\end{equation}
where $d(t)>1/2$ for each $t\in S$. $d(t)>1/2$ is a necessary and sufficient condition for the almost sure convergence of the series \eqref{eq:series of rv}. This fact easily follows from Kolmogorov's three-series theorem.

It is possible to choose some other second-order stationary sequence of random variables that can have long memory (for example, $Y_k(t)=\sum_{j=0}^\infty a_j(t)\varepsilon_{k-j}(t)$, where $a_j(t)\sim (j+1)^{-d(t)}$), but our aim is to investigate space varying memory and we want to avoid any unnecessary technical difficulties.

If $\on{E}\varepsilon_0(t)\ne0$, then the sequence $\cb{X_k(t)}$, $t\in S$, can only have short memory (i.e.\ absolutely summable autocovariances), since then the series~\eqref{eq:series of rv} converges almost surely if and only if  $d(t)>1$ and absolute summability of $\pa{j+1}^{-d(t)}$ implies that autocovariances are absolutely summable (see, for example, \citet{hamilton1994}, p. 70).

Routine calculations show that the sequences $\cb{X_k(s)}$ and $\cb{X_k(t)}$ for $s,t\in S$ are sequences of zero mean random variables with the following expression for the cross-covariance
\begin{equation}
\label{eq:cross-cov}
\on{E}\br{X_0(s)X_h(t)}=\sigma(s,t)\sum_{j=0}^\infty\pa{j+1}^{-d(s)}\pa{j+h+1}^{-d(t)}.
\end{equation}

Now we establish the asymptotic behaviour of the sequence of cross-covariances. $a_n\sim b_n$ as $n\to\infty$ indicates that the sequences $a_n$ and $b_n$ are asymptotically equivalent, i.e.\ the ratio of the two sequences tends to one as $n$ goes to infinity.
\begin{prp}
\label{prp:asym cov}
Let $s,t\in S$ be fixed. If $1/2<d(s)<1$ and $d(t)>1/2$, then
\begin{equation*}
\on{E}\br{X_0(s)X_h(t)}\sim c(s,t)\sigma(s,t)\cdot h^{1-\br{d(s)+d(t)}}\quad\text{as}\quad h\to\infty,
\end{equation*}
where
\begin{equation}
\label{eq:function c}
c(s,t)\mathrel{\mathop:}=\int_0^\infty\!x^{-d(s)}(x+1)^{-d(t)}\,\mathrm{d}x.
\end{equation}
If $s=t$, we denote $c(t)\mathrel{\mathop:}=c(t,t)$ and $\sigma^2(t)\mathrel{\mathop:}=\sigma(t,t)$.

If $d(s)=d(t)=1$, then
\begin{equation*}
\on{E}\br{X_0(s)X_h(t)}\sim\sigma(s,t)\cdot h^{-1}\ln h\quad\text{as}\quad h\to\infty.
\end{equation*}
\end{prp}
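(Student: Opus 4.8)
The statement reduces to the asymptotics of the deterministic sum
\[
T_h \mathrel{\mathop:}= \sum_{j=0}^\infty \pa{j+1}^{-d(s)}\pa{j+h+1}^{-d(t)},
\]
since \eqref{eq:cross-cov} gives $\on{E}\br{X_0(s)X_h(t)} = \sigma(s,t)\,T_h$, and I would treat the two cases separately. For the first case write $\alpha \mathrel{\mathop:}= d(s)$, $\beta \mathrel{\mathop:}= d(t)$, and factor out the leading power of $h$ by setting $x_j \mathrel{\mathop:}= (j+1)/h$. Since $j+h+1 = h(x_j+1)$, this yields
\[
T_h = h^{1-\alpha-\beta}\sum_{j=0}^\infty g(x_j)\,\tfrac1h, \qquad g(x)\mathrel{\mathop:}= x^{-\alpha}(x+1)^{-\beta},
\]
so that the remaining factor is a (right-endpoint) Riemann sum of $g$ over the partition of $(0,\infty)$ with mesh $1/h$. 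The plan is to show it converges to $\int_0^\infty g(x)\,\mathrm dx = c(s,t)$ as $h\to\infty$. First I would verify that $c(s,t)$ is finite: near $0$ one has $g(x)\sim x^{-\alpha}$, integrable because $\alpha<1$, and near infinity $g(x)\sim x^{-(\alpha+\beta)}$, integrable because $\alpha+\beta>1$ (both from the hypotheses $1/2<d(s)<1$, $d(t)>1/2$).

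The convergence of the Riemann sum I would obtain by sandwiching, exploiting that $g$ is strictly decreasing on $(0,\infty)$. Comparing the value at the right endpoint of each interval $\br{j/h,(j+1)/h}$ with the integral of $g$ over that interval gives the upper bound $\sum_{j}g(x_j)/h \le \int_0^\infty g = c(s,t)$; comparing with the left endpoints over the intervals indexed by $j\ge 1$ gives $\sum_{j}g(x_j)/h \ge c(s,t) - \int_0^{1/h} g$. Since $\int_0^{1/h} g \to 0$ by integrability at the origin, the Riemann sum converges to $c(s,t)$, and hence $T_h \sim c(s,t)\,h^{1-\alpha-\beta}$, which is the first assertion. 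I expect the singularity of $g$ at the origin to be the main obstacle: $g$ is unbounded there, so the usual Riemann-sum comparison must be supplemented by the observations that the first term $g(x_0)/h = h^{\alpha-1}(1+1/h)^{-\beta}\to 0$ and that the integral over $\br{0,1/h}$ is asymptotically negligible.

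The second case is cleaner, and I would handle it by an exact evaluation. With $d(s)=d(t)=1$, partial fractions give
\[
\frac{1}{(j+1)(j+h+1)} = \frac1h\pa{\frac{1}{j+1} - \frac{1}{j+h+1}},
\]
so that $T_h = \tfrac1h\sum_{j=0}^\infty\pa{\tfrac{1}{j+1}-\tfrac{1}{j+h+1}}$. Writing the partial sums through the harmonic numbers $H_n \mathrel{\mathop:}= \sum_{m=1}^n 1/m$ collapses the series: the $N$th partial sum equals $H_{N+1}-H_{N+h+1}+H_h$, whose limit as $N\to\infty$ is $H_h$ because $H_{N+1}-H_{N+h+1}\to0$. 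Thus $T_h = H_h/h$ exactly, and the classical asymptotics $H_h\sim\ln h$ give $T_h \sim h^{-1}\ln h$, completing the proof. Here there is no real obstacle beyond verifying that the tail difference of the two harmonic-type sums vanishes.
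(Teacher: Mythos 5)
Your proof is correct. In the first case ($1/2<d(s)<1$, $d(t)>1/2$) your argument is essentially the paper's own: your two monotonicity comparisons of the Riemann sum with mesh $1/h$ are exactly the paper's sandwich
\begin{equation*}
h^{1-\br{d(s)+d(t)}}\int_{1/h}^\infty\!x^{-d(s)}(x+1)^{-d(t)}\,\mathrm{d}x\;\le\;\sum_{j=0}^\infty\pa{j+1}^{-d(s)}\pa{j+h+1}^{-d(t)}\;\le\;h^{1-\br{d(s)+d(t)}}\,c(s,t),
\end{equation*}
with the lower bound rewritten as $c(s,t)-\int_0^{1/h}g$; the paper leaves the endpoint comparisons and the vanishing of $\int_0^{1/h}g$ implicit, while you spell them out, including the (correct) observation that the singularity of $x^{-d(s)}$ at the origin is harmless precisely because $d(s)<1$. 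In the second case ($d(s)=d(t)=1$) you genuinely depart from the paper: there the paper again brackets the series between integral approximations whose common leading term is $h^{-1}\ln\pa{(h+1)/2}$, whereas you use the partial-fraction identity $\br{(j+1)(j+h+1)}^{-1}=h^{-1}\br{(j+1)^{-1}-(j+h+1)^{-1}}$ and telescoping to obtain the exact closed form $\sum_{j=0}^\infty\br{(j+1)(j+h+1)}^{-1}=H_h/h$, with $H_h$ the $h$-th harmonic number. Your route is cleaner and slightly stronger---it yields $h^{-1}\pa{\ln h+\gamma+o(1)}$, $\gamma$ being Euler's constant, rather than just asymptotic equivalence---but it is special to the exponent $1$; the paper's integral-comparison method is less sharp here yet treats both cases with one uniform tool, which is why the paper prefers it.
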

\begin{proof}
We approximate the series in equation \eqref{eq:cross-cov} by integrals to obtain the following inequalities: if $\frac{1}{2}<d(s)<1$ and $d(t)>\frac{1}{2}$, then we obtain
\begin{align}
\label{in:series}
\sum_{j=0}^\infty\pa{j+1}^{-d(s)}\pa{j+h+1}^{-d(t)}&\ge h^{1-\br{d(s)+d(t)}}\int_{\frac1h}^\infty\!x^{-d(s)}\pa{x+1}^{-d(t)}\,\mathrm{d}x,\\
\notag
\sum_{j=0}^\infty\pa{j+1}^{-d(s)}\pa{j+h+1}^{-d(t)}&\le h^{1-\br{d(s)+d(t)}}\int_0^\infty\!x^{-d(s)}\pa{x+1}^{-d(t)}\,\mathrm{d}x;
\end{align}
if $d(s)=d(t)=1$, then we have that
\begin{align*}
\sum_{j=0}^\infty\br{(j+1)(j+h+1)}^{-1}&\ge h^{-1}\br{\ln\pa{\frac{h+1}2}+\int_1^\infty\![y(y+1)]^{-1}\,\mathrm{d}y}\\
\sum_{j=0}^\infty\br{(j+1)(j+h+1)}^{-1}&\le\pa{h+1}^{-1}+h^{-1}\br{\ln\pa{\frac{h+1}2}+\int_1^\infty\!\br{y(y+1)}^{-1}\,\mathrm{d}y}.\qedhere
\end{align*}
\end{proof}

Next, we investigate the convergence of the series of cross-covariances.
\begin{prp}
\label{prp:series of ac}
Let $s,t\in S$. The series
\begin{equation}
\label{eq:series of ac}
\sum_{h=1}^\infty\on{E}\br{X_0(s)X_h(t)}
\end{equation}
converges if and only if both of the conditions $d(t)>1$ and $d(s)+d(t)>2$ are fulfilled.
\end{prp}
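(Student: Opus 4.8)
The plan is to reduce the whole question to the finiteness of a single nonnegative double series and to resolve it by interchanging the order of summation. Using \eqref{eq:cross-cov}, I would write $\sum_{h=1}^\infty\on{E}\br{X_0(s)X_h(t)}=\sigma(s,t)\,D$, where $D\mathrel{\mathop:}=\sum_{h=1}^\infty\sum_{j=0}^\infty\pa{j+1}^{-d(s)}\pa{j+h+1}^{-d(t)}$. Assuming $\sigma(s,t)\ne0$ (otherwise every cross-covariance vanishes and the series converges trivially), convergence of \eqref{eq:series of ac} is equivalent to the finiteness of $D$. Since all summands are nonnegative, Tonelli's theorem lets me interchange the two sums and write $D=\sum_{j=0}^\infty\pa{j+1}^{-d(s)}A_j$, where $A_j\mathrel{\mathop:}=\sum_{h=1}^\infty\pa{j+h+1}^{-d(t)}=\sum_{m=j+2}^\infty m^{-d(t)}$.

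First I would settle the necessity of $d(t)>1$. If $d(t)\le1$, then $A_j=+\infty$ for every $j$, so $D=+\infty$ and the series diverges; hence $d(t)>1$ is necessary. Assuming $d(t)>1$ from now on, I would estimate the tail $A_j$ by the integral comparison $\int_{j+2}^\infty x^{-d(t)}\,\mathrm{d}x\le A_j\le\pa{j+2}^{-d(t)}+\int_{j+2}^\infty x^{-d(t)}\,\mathrm{d}x$, which bounds $A_j$ above and below by positive constant multiples of $\pa{j+1}^{1-d(t)}$, the implicit constants depending only on $d(t)$.

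Substituting these bounds into the outer sum gives $D\asymp\sum_{j=0}^\infty\pa{j+1}^{-d(s)}\pa{j+1}^{1-d(t)}=\sum_{j=0}^\infty\pa{j+1}^{1-\br{d(s)+d(t)}}$, a $p$-series that converges if and only if $1-\br{d(s)+d(t)}<-1$, i.e.\ $d(s)+d(t)>2$. Combining this with the necessity of $d(t)>1$ yields the claimed equivalence.

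The only delicate point is that I need \emph{matched} two-sided estimates for $A_j$, so that both the necessity and the sufficiency of $d(s)+d(t)>2$ follow from the same comparison; the integral bounds above provide exactly this, uniformly in $j$. I would also note that Proposition \ref{prp:asym cov} supplies the asymptotics of the individual cross-covariances only for $1/2<d(s)<1$, so the direct double-sum argument is what covers the full range $d(s)>1/2$ (in particular $d(s)\ge1$); for $1/2<d(s)<1$ one could alternatively combine Proposition \ref{prp:asym cov} with the limit comparison test, but the Tonelli computation disposes of all cases at once.
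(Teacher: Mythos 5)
Your proof is correct, and it takes a genuinely different route from the paper's. The paper peels off the $j=0$ term of the inner series in \eqref{eq:cross-cov}, writing \eqref{eq:series of ac} as $\sigma(s,t)$ times the sum of $\sum_{h=1}^\infty\pa{h+1}^{-d(t)}$ (which is what detects the condition $d(t)>1$) and the double series over $j\ge1$; it then handles that double series, summed in the original order, by two separate arguments: divergence when $d(s)+d(t)\le2$ via a modification of the integral lower bound \eqref{in:series}, and convergence when $d(t)>1$ and $d(s)+d(t)>2$ via the splitting $\pa{j+h+1}^{-d(t)}\le\pa{j+1}^{-d(t)+1+\delta}h^{-(1+\delta)}$ for a suitably chosen $\delta$. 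You instead keep the full double series, interchange the order of summation by Tonelli, and derive the matched two-sided estimate $A_j\asymp\pa{j+1}^{1-d(t)}$ for the inner tails, which collapses everything to a single $p$-series; the necessity of $d(t)>1$ falls out for free because every inner sum is infinite otherwise. What your route buys is uniformity: one comparison yields both the necessity and the sufficiency of $d(s)+d(t)>2$, no auxiliary $\delta$ is needed, and no separate divergence argument is required (the paper's reliance on \eqref{in:series}, which was derived under $1/2<d(s)<1$, is precisely why it speaks of a ``slight modification'' for the remaining cases). What the paper's route buys is economy within its own structure, since it recycles the integral estimate already established in the proof of Proposition \ref{prp:asym cov}. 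Two of your side remarks are also well taken: first, when $\sigma(s,t)=0$ the series converges trivially whatever $d(s),d(t)$ are, so the ``only if'' direction holds only after reducing to $\sigma(s,t)\ne0$, a point the paper passes over silently; second, Proposition \ref{prp:asym cov} gives the asymptotics of the cross-covariances only for $1/2<d(s)<1$, so a limit-comparison proof based on it would not cover $d(s)\ge1$, whereas your direct computation (like the paper's direct estimates) covers the whole range $d(s)>1/2$.
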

\begin{proof}
Series \eqref{eq:series of ac} has the following expression
\begin{equation*}
\sum_{h=1}^\infty\on{E}\br{X_0(s)X_h(t)}=\sigma(s,t)\br{\sum_{h=1}^\infty\pa{h+1}^{-d(t)}+\sum_{h=1}^\infty\sum_{j=1}^\infty\pa{j+1}^{-d(s)}\pa{j+h+1}^{-d(t)}}.
\end{equation*}
The first series of the right-hand side of the equation above converges if and only if $d(t)>1$. Thus, we only need to investigate the convergence of the series
\begin{equation}
\label{eq:series}
\sum_{h=1}^\infty\sum_{j=1}^\infty\pa{j+1}^{-d(s)}\pa{j+h+1}^{-d(t)}.
\end{equation}

A slight modification of inequality \eqref{in:series} shows that series \eqref{eq:series} diverges if $d(s)+d(t)\le 2$. To show that series \eqref{eq:series} converges if $d(t)>1$ and $d(s)+d(t)>2$, we choose $\delta>0$ such that $1<1+\delta<d(t)$ and $d(s)+d(t)-\delta>2$ to obtain the following inequality
\begin{equation*}
\sum_{h=1}^\infty\sum_{j=1}^\infty\pa{j+1}^{-d(s)}\pa{j+h+1}^{-d(t)}\le\sum_{h=1}^\infty\sum_{j=1}^\infty\pa{j+1}^{-d(s)-d(t)+1+\delta}h^{-(1+\delta)}<\infty.\qedhere
\end{equation*}
\end{proof}
\begin{remark}
The series $\sum_{h=1}^\infty\on{E}\br{X_0(t)X_h(t)}$ converges if and only if $d(t)>1$.
\end{remark}

Suppose $\mathcal{L}^2\pa{\mu}\mathrel{\mathop:}=\mathcal{L}^2\pa{S,\mathcal S,\mu}$ is a separable space of real valued square $\mu$-integrable functions with a seminorm
\begin{equation*}
\nm{f}_2\mathrel{\mathop:}=\pa{\int_S\!\abs{f(r)}^2\,\mu\pa{\mathrm{d}r}}^{1/2}.
\end{equation*}

\mbox{Proposition \ref{prp:sqi}} establishes assumptions under which the sample paths of the processes $\cb{X_k}$ almost surely belong to the space $\mathcal{L}^2\pa{\mu}$.
\begin{prp}
\label{prp:sqi}
The sample paths of the random processes $\cb{X_k}$ almost surely belong to the space $\mathcal{L}^2\pa{\mu}$ if and only if both of the integrals
\begin{equation}
\label{eq:sample paths}
\quad\int_S\!\sigma^2(r)\,\mu(\mathrm{d}r)\quad\text{and}\quad\int_S\!\frac{\sigma^2(r)}{2d(r)-1}\,\mu(\mathrm{d}r)
\end{equation}
are finite.
\end{prp}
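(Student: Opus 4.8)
The plan is to recognise that the sample path $X_k$ lies in $\mathcal L^2(\mu)$ precisely on the event $\cb{Z<\infty}$, where $Z\mathrel{\mathop:}=\nm{X_k}_2^2=\int_S X_k^2(r)\,\mu(\mathrm dr)$, and to compare this event with the finiteness of $\on{E} Z$. Since the processes $\varepsilon_k$ are measurable, the map $(r,\omega)\mapsto X_k^2(r)$ is jointly measurable and nonnegative, so Tonelli's theorem gives $\on{E} Z=\int_S\on{E} X_k^2(r)\,\mu(\mathrm dr)$ as an identity in $[0,\infty]$. Taking $h=0$ and $s=t=r$ in \eqref{eq:cross-cov} yields $\on{E} X_0^2(r)=\sigma^2(r)\sum_{j=0}^\infty(j+1)^{-2d(r)}=\sigma^2(r)\zeta\pa{2d(r)}$, where $\zeta(s)=\sum_{m=1}^\infty m^{-s}$ and the series converges because $d(r)>1/2$. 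Thus the statement reduces to two facts: first, that $\on{E} Z=\int_S\sigma^2(r)\zeta\pa{2d(r)}\,\mu(\mathrm dr)$ is finite if and only if both integrals in \eqref{eq:sample paths} are finite; and second, that $Z<\infty$ almost surely if and only if $\on{E} Z<\infty$.

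For the first fact I would control $\zeta\pa{2d(r)}$ by elementary integral comparison. Bounding $\sum_{m\ge1}m^{-s}$ above and below by $\int_1^\infty x^{-s}\,\mathrm dx=(s-1)^{-1}$ and using $\zeta(s)\ge1$ gives, for every $s>1$,
\begin{equation*}
\frac12\pa{1+\frac1{s-1}}\le\zeta(s)\le1+\frac1{s-1}.
\end{equation*}
Setting $s=2d(r)$ sandwiches $\sigma^2(r)\zeta\pa{2d(r)}$ between fixed multiples of $\sigma^2(r)+\sigma^2(r)/(2d(r)-1)$, so integrating over $S$ shows that $\on{E} Z$ is finite exactly when both integrals in \eqref{eq:sample paths} are finite. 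This step is routine; note that the second integral is what detects the region where $d(r)$ approaches $1/2$, at which $\zeta\pa{2d(r)}$ blows up like $(2d(r)-1)^{-1}$.

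The easy implication of the second fact is immediate: if $\on{E} Z<\infty$ then the nonnegative variable $Z$ is finite almost surely, so the sample paths lie in $\mathcal L^2(\mu)$ almost surely. The converse is the main obstacle, because an almost surely finite nonnegative random variable need not have finite expectation, so one must use the independence of the series rather than Tonelli alone. I would argue by contraposition: assuming $\on{E} Z=\infty$, write $X_0=\sum_{j\ge0}\eta_j$ with $\eta_j\mathrel{\mathop:}=\pa{j+1}^{-d(\cdot)}\varepsilon_{-j}(\cdot)$, a series of independent, mean zero, $\mathcal L^2(\mu)$-valued summands whose orthogonality gives $\on{E}\nm{S_n}_2^2=\sum_{j=0}^n\on{E}\nm{\eta_j}_2^2\uparrow\on{E} Z=\infty$ for the partial sums $S_n$. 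After a symmetrisation reducing to symmetric $\varepsilon_k$, a maximal inequality of Levy type together with a three-series theorem for random series in a separable Hilbert space should force $\nm{S_n}_2\to\infty$, whence $Z=\infty$ almost surely.

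The delicate points I expect to fight with are all in this last step. One must check that divergence of $\sum_j\on{E}\nm{\eta_j}_2^2$ really implies divergence of the truncated three-series quantity $\sum_j\on{E}\br{\min(\nm{\eta_j}_2^2,1)}$ (true because $\nm{\eta_j}_2^2$ is small for large $j$ whenever $\int_S\sigma^2\,\mathrm d\mu<\infty$, while the case $\int_S\sigma^2\,\mathrm d\mu=\infty$ must be handled separately through the leading term $\eta_0=\varepsilon_0$); and, more seriously, one must tie the pointwise limit $X_0(r)=\lim_n S_n(r)$ to the $\mathcal L^2(\mu)$-behaviour of $S_n$ by an Ito--Nisio type argument, so that failure of the series to converge in $\mathcal L^2(\mu)$ genuinely certifies $X_0\notin\mathcal L^2(\mu)$ almost surely, rather than merely $\nm{S_n}_2\not\to\nm{X_0}_2$.
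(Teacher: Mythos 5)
Your first step (Tonelli plus the sandwich for $\zeta\pa{2d(r)}$) and the easy implication are correct, and together they constitute essentially the whole of the paper's own proof: the paper likewise shows only that $\on{E}\nm{X_0}_2^2=\int_S\sigma^2(r)\zeta\pa{2d(r)}\,\mu(\mathrm{d}r)$ is finite exactly when the two integrals in \eqref{eq:sample paths} are, and then silently identifies almost sure finiteness of $\nm{X_0}_2^2$ with finiteness of its expectation. You correctly isolate that identification as the real problem. The gap is that your proposed repair of the hard direction cannot work, because that direction is false. For independent nonnegative summands, the almost sure behaviour of $\sum_j\nm{\eta_j}_2^2$ is governed by the truncated quantities $\on{E}\br{\min\pa{\nm{\eta_j}_2^2,1}}$, and nothing in the standing assumptions (only pointwise moments $\on{E}\varepsilon_0^2(t)<\infty$ are assumed) prevents $\nm{\varepsilon_0}_2^2$ from being heavy-tailed: almost surely finite yet with infinite mean. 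Then both of your ``delicate points'' break down: divergence of $\sum_j\on{E}\nm{\eta_j}_2^2$ does not force divergence of the truncated series, and in the case $\int_S\sigma^2\,\mathrm{d}\mu=\infty$ the leading term $\eta_0=\varepsilon_0$ proves nothing, since $\on{E}\nm{\varepsilon_0}_2^2=\infty$ is compatible with $\nm{\varepsilon_0}_2<\infty$ almost surely.

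In fact the ``only if'' direction of the proposition itself admits a counterexample, so no strategy can close this gap. Let $S=\mathbb{N}$ with counting measure, $d\equiv1$, and $\varepsilon_0(n)\mathrel{\mathop:}=ns\chi_{\cb{N=n}}$, where $P\pa{N=n}=cn^{-3}$ with $c=1/\zeta(3)$ and $s=\pm1$ with probability $1/2$ independent of $N$; let $\cb{\varepsilon_k}$ be i.i.d.\ copies built from i.i.d.\ pairs $\pa{N_j,s_j}$. Then $\on{E}\varepsilon_0(n)=0$ and $\sigma^2(n)=n^2P\pa{N=n}=cn^{-1}<\infty$, so all assumptions of Section \ref{sec:sp} hold, while $\int_S\sigma^2\,\mathrm{d}\mu=c\sum_n n^{-1}=\infty$, so Proposition \ref{prp:sqi} asserts that the sample paths are not almost surely in $\mathcal{L}^2(\mu)$. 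But $X_0(n)=nT_n$ with $T_n=\sum_{j:N_j=n}(j+1)^{-1}s_j$, and conditionally on $\mathcal{N}\mathrel{\mathop:}=\sigma\pa{N_j,j\ge0}$ the signs remain i.i.d.\ Rademacher, whence
\begin{equation*}
\on{E}\br{\nm{X_0}_2^2\mid\mathcal{N}}=\sum_{n}n^2\sum_{j:N_j=n}(j+1)^{-2}=\sum_{j=0}^\infty(j+1)^{-2}N_j^2.
\end{equation*}
The right-hand side, call it $D$, is almost surely finite: $\sum_jP\pa{N_j>j+1}\le\frac{c}{2}\sum_j(j+1)^{-2}<\infty$, so by Borel--Cantelli only finitely many terms have $N_j>j+1$, while $\sum_j(j+1)^{-2}\on{E}\br{N_j^2\chi_{\cb{N_j\le j+1}}}\le c\sum_j(j+1)^{-2}\pa{1+\ln(j+1)}<\infty$ makes the remaining (truncated) sum integrable, hence finite almost surely. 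A conditionally finite mean forces $\nm{X_0}_2^2<\infty$ almost surely, and the same holds for every $X_k$; thus the sample paths do lie in $\mathcal{L}^2(\mu)$ almost surely although the integrals \eqref{eq:sample paths} diverge. (A variant with $d(n)=\frac12+\frac1n$, $\sigma^2(n)=n^{-2}$, and the mass of $\varepsilon_0$ concentrated on one random coordinate of rapidly growing amplitude defeats the other regime, $\int\sigma^2\,\mathrm{d}\mu<\infty$ but $\int\sigma^2/(2d-1)\,\mathrm{d}\mu=\infty$, as well.) What is true, and what both your first step and the paper's argument actually establish, is that the integrals \eqref{eq:sample paths} are finite if and only if $\on{E}\nm{X_0}_2^2<\infty$, and that this suffices for almost sure membership; the converse would require extra hypotheses (for instance Gaussian $\varepsilon_0$, where almost sure finiteness of $\nm{\varepsilon_0}_2$ upgrades to finite second moments), not a cleverer proof.
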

\begin{proof}
We show that the expected value
\begin{equation*}
\on{E}\br{\int_S\,X^2_0\pa{r}\!\mu\pa{\mathrm{d}r}}
\end{equation*}
is finite if and only if integrals \eqref{eq:sample paths} are finite. First, using Fubini's theorem we obtain
\begin{equation*}
\on{E}\br{\int_S\,X^2_0\pa{r}\!\mu\pa{\mathrm{d}r}}=\int_S\,\on{E}X^2_0\pa{r}\!\mu\pa{\mathrm{d}r}.
\end{equation*}
Secondly, setting $h=0$ and $s=t$ in equation \eqref{eq:cross-cov} gives the expression for the variance
\begin{equation*}
\on{E}X_0^2(t)=\sigma^2(t)\sum_{j=0}^\infty\pa{j+1}^{-2d(t)},\quad t\in S.
\end{equation*}
Approximation of the series above by integrals leads to the following inequalities
\begin{align*}
2\int_S\!\on{E}X_0^2(r)\,\mu\pa{\mathrm{d}r}&\ge\int_S\!\sigma^2(r)\,\mu\pa{\mathrm{d}r}+\int_S\!\frac{\sigma^2(r)}{2d(r)-1}\,\mu\pa{\mathrm{d}r},\\
\int_S\!\on{E}X_0^2(r)\,\mu\pa{\mathrm{d}r}&\le\int_S\!\sigma^2(r)\,\mu\pa{\mathrm{d}r}+\int_S\!\frac{\sigma^2(r)}{2d(r)-1}\,\mu\pa{\mathrm{d}r}.\qedhere
\end{align*}
\end{proof}

\section{The central limit theorem}\label{sec:clt}
Before we formulate sufficient conditions for the central limit theorem, we clarify what is meant by weak convergence of a sequence of random processes with square $\mu$-integrable sample paths (see \citet{cremers1986} for details). Suppose $\cb{\xi_n}$ is a sequence of measurable random processes with sample paths in $\mathcal{L}^2\pa{\mu}$. Let $L^2\pa{\mu}\mathrel{\mathop:}=L^2\pa{S,\mathcal{S},\mu}$ be the corresponding Banach space of equivalence classes of $\mu$-almost everywhere equal measurable functions and let $\mathcal{B}\pa{L^2\pa{\mu}}$ be its Borel $\sigma$-algebra. The maps $\hat{\xi}_n:\Omega\to L^2\pa{\mu},\omega\to\hat{\xi}_n(\omega)\mathrel{\mathop:}=\xi_n\pa{\omega,\cdot}$ are $\mathcal{F}-\mathcal{B}\pa{L^2\pa{\mu}}$-measurable (see \citet{cremers1986}); hence the distributions $\hat P_n$ of $\hat{\xi}_n$ are well defined probability measures on $\pa{L^2\pa{\mu},\mathcal{B}\pa{L^2\pa{\mu}}}$. It is said that the sequence $\cb{\xi_n}$ converges weakly to $\xi$ and it is written $\xi_n\Rightarrow\xi$ if the corresponding image measures $\hat P_n$ converge weakly, i.e.\ $\int f\mathrm{d}\hat P_n\to\int f\mathrm{d}\hat P$ for all continuous and bounded  real valued functions defined on $L^2\pa{\mu}$.

The following result establishes sufficient conditions for the central limit theorem for the partial sums $\sum_{k=1}^nX_k$.
\begin{prp}
\label{prp:clt}
\begin{enumerate}[(i)]
\item Suppose $1/2<d(t)<1$, $\on{E}\varepsilon_0^2(t)<\infty$ for each $t\in S$ and both of the integrals
\begin{equation}
\label{eq:intclt}
\int_S\!\frac{\sigma^2(r)}{\br{1-d(r)}^2}\,\mu\pa{\mathrm{d}r}\quad\text{and}\quad\int_S\!\frac{\sigma^2(r)}{\br{1-d(r)}\br{2d(r)-1}}\,\mu\pa{\mathrm{d}r}
\end{equation}
are finite. Then
\begin{equation*}
n^{-H}\sum_{k=1}^nX_k\Rightarrow\mathcal{G},
\end{equation*}
where $n^{-H}$ is a sequence of multiplication operators with the expression $n^{-H}f(t)=n^{-\br{3/2-d(t)}}f(t)$ for each $t\in S$, $f\in L^2\pa{\mu}$, and $\mathcal{G}\mathrel{\mathop:}=\cb{\mathcal{G}(t),t\in S}$ is a zero mean Gaussian random process with the following autocovariance function
\begin{equation*}
\on{E}\br{\mathcal{G}(s)\mathcal{G}(t)}=\frac{\br{c(s,t)+c(t,s)}\sigma(s,t)}{\pa{2-\br{d(s)+d(t)}}\pa{3-\br{d(s)+d(t)}}},
\end{equation*}
where $c(s,t)$ is the function \eqref{eq:function c} and $\sigma(s,t)\mathrel{\mathop:}=\on{E}\br{\varepsilon_0(s)\varepsilon_0(t)}$, $s,t\in S$.
\item Suppose $d(t)=1$ and $\on{E}\varepsilon_0^2(t)<\infty$ for each $t\in S$ and
\begin{equation*}
\int_S\!\sigma^2(r)\,\mu\pa{\mathrm{d}r}<\infty.
\end{equation*}
Then
\begin{equation*}
\frac1{\sqrt n\ln n}\sum_{k=1}^nX_k\Rightarrow\mathcal{G}',
\end{equation*}
where $\mathcal{G'}\mathrel{\mathop:}=\cb{\mathcal{G'}(t),t\in S}$ is a zero mean Gaussian random process with the autocovariance function $\on{E}\br{\mathcal{G}'(s)\mathcal{G}'(t)}=\sigma(s,t)$, where $\sigma(s,t)\mathrel{\mathop:}=\on{E}\br{\varepsilon_0(s)\varepsilon_0(t)}, s,t\in S$.
\end{enumerate}
\end{prp}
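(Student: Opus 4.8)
The plan is to prove both parts through the classical two-step route to weak convergence of a Gaussian limit in the separable Hilbert space $L^2\pa{\mu}$: convergence of the finite-dimensional distributions together with tightness. Write $S_n\mathrel{\mathop:}=\sum_{k=1}^nX_k$, let $\langle f,g\rangle\mathrel{\mathop:}=\int_Sf(r)g(r)\,\mu\pa{\mathrm{d}r}$ denote the inner product, and fix a complete orthonormal system $\cb{e_i}$ of $L^2\pa{\mu}$. I would use the criterion that $n^{-H}S_n\Rightarrow\mathcal{G}$ follows once (a) for every $g$ in a dense subset of $L^2\pa{\mu}$ the scalar variable $\langle n^{-H}S_n,g\rangle$ converges in distribution to a centred normal law with variance $\on{E}\langle\mathcal{G},g\rangle^2$, and (b) $\on{E}\nm{n^{-H}S_n}_2^2\to\on{E}\nm{\mathcal{G}}_2^2<\infty$. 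The Cram\'er--Wold device turns (a) into convergence of the finite-dimensional distributions, since any linear combination of coordinates is again an inner product against a fixed test function; and (b) furnishes tightness, because the tail sums $\sum_{i>N}\on{E}\langle n^{-H}S_n,e_i\rangle^2=\on{E}\nm{n^{-H}S_n}_2^2-\sum_{i\le N}\on{E}\langle n^{-H}S_n,e_i\rangle^2$ can be made uniformly small in $n$ by taking $N$ large, using (b) together with the coordinatewise convergence $\on{E}\langle n^{-H}S_n,e_i\rangle^2\to\on{E}\langle\mathcal{G},e_i\rangle^2$ obtained from the covariance computation below.

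The covariance asymptotics drive everything. Using joint stationarity I would write $\on{E}\br{S_n(s)S_n(t)}=\sum_{h=0}^{n-1}\pa{n-h}\on{E}\br{X_0(s)X_h(t)}+\sum_{h=1}^{n-1}\pa{n-h}\on{E}\br{X_0(t)X_h(s)}$ and substitute the equivalence $\on{E}\br{X_0(s)X_h(t)}\sim c(s,t)\sigma(s,t)\,h^{1-\br{d(s)+d(t)}}$ of Proposition~\ref{prp:asym cov}. Comparing $\sum_{h=0}^{n-1}\pa{n-h}h^{1-\br{d(s)+d(t)}}$ with $\int_0^n\pa{n-x}x^{1-\br{d(s)+d(t)}}\,\mathrm{d}x=\frac{n^{3-\br{d(s)+d(t)}}}{\pa{2-\br{d(s)+d(t)}}\pa{3-\br{d(s)+d(t)}}}$ and multiplying by the two normalizing factors $n^{-\br{3/2-d(s)}}$ and $n^{-\br{3/2-d(t)}}$ reproduces exactly the stated limit $\on{E}\br{\mathcal{G}(s)\mathcal{G}(t)}$. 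For the one-dimensional laws in (a) I would exploit the linear structure: reindexing $S_n$ by the innovation time gives $S_n(t)=\sum_mb_{n,m}(t)\varepsilon_m(t)$, where $b_{n,m}(t)=\sum_{k=\max\pa{1,m}}^n\pa{k-m+1}^{-d(t)}$ is the coefficient of $\varepsilon_m(t)$, so that $\langle n^{-H}S_n,g\rangle=\sum_mW_{n,m}$ with $W_{n,m}\mathrel{\mathop:}=\int_Sn^{-\br{3/2-d(t)}}b_{n,m}(t)g(t)\varepsilon_m(t)\,\mu\pa{\mathrm{d}t}$ independent across $m$ and centred. A Lindeberg central limit theorem for this triangular array then yields the normal limit, its variance being the double $\mu$-integral of the covariance above weighted by $g(s)g(t)$.

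For (b) I would compute, by Fubini, $\on{E}\nm{n^{-H}S_n}_2^2=\int_Sn^{-\br{3-2d(r)}}\on{E}\br{S_n(r)^2}\,\mu\pa{\mathrm{d}r}$, note that the covariance asymptotics force the integrand to converge pointwise to $\on{E}\mathcal{G}(r)^2=\frac{c(r)\sigma^2(r)}{\pa{1-d(r)}\pa{3-2d(r)}}$, and pass to the limit under the integral. This is where the hypotheses \eqref{eq:intclt} enter. Since $c(r)=c(r,r)$ is the Beta integral $B\pa{1-d(r),2d(r)-1}$, of order $\pa{1-d(r)}^{-1}$ as $d(r)\to1$ and of order $\pa{2d(r)-1}^{-1}$ as $d(r)\to1/2$ and bounded in between, the limit integrand is dominated by a fixed combination of $\sigma^2(r)\pa{1-d(r)}^{-2}$ and $\sigma^2(r)\br{\pa{1-d(r)}\pa{2d(r)-1}}^{-1}$, both $\mu$-integrable by \eqref{eq:intclt}; in particular $\on{E}\nm{\mathcal{G}}_2^2<\infty$, so $\mathcal{G}$ is a genuine $L^2\pa{\mu}$-valued Gaussian element. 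Retaining the explicit non-asymptotic inequalities \eqref{in:series} rather than their limiting form yields a bound on $n^{-\br{3-2d(r)}}\on{E}\br{S_n(r)^2}$ that is uniform in $n$ and controlled by the same two integrands, which legitimizes the interchange of limit and integral by dominated convergence.

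I expect the main obstacle to be exactly this uniform control: turning the pointwise covariance asymptotics of Proposition~\ref{prp:asym cov} into bounds that are simultaneously uniform in $n$ and $\mu$-integrable near the two critical exponents $d(r)=1/2$ and $d(r)=1$, where the constants in that proposition diverge; the tailored integrands in \eqref{eq:intclt} are precisely what tames these singularities. A secondary delicate point is the Lindeberg verification, which reduces to checking asymptotic negligibility $\max_m\on{E}W_{n,m}^2\to0$ (driven by the largest normalized coefficient, attained at $m=1$, where $n^{-\br{3/2-d(t)}}b_{n,1}(t)$ is of order $n^{-1/2}$) together with the integrability $\int_S\sigma^2(r)\,\mu\pa{\mathrm{d}r}<\infty$ forced by \eqref{eq:intclt}; restricting (a) to test functions $g$ with $\int_Sg^2(r)\pa{1-d(r)}^{-2}\,\mu\pa{\mathrm{d}r}<\infty$, a dense class, keeps the truncated moments manageable, and the general case follows once tightness is in hand. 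Part~(ii) runs along identical lines with the scalar normalization $\pa{\sqrt n\,\ln n}^{-1}$ in place of $n^{-H}$: the boundary equivalence $\on{E}\br{X_0(s)X_h(t)}\sim\sigma(s,t)h^{-1}\ln h$ gives $\on{E}\br{S_n(s)S_n(t)}\sim\sigma(s,t)\,n\pa{\ln n}^2$ and hence the limiting covariance $\sigma(s,t)$, while the single condition $\int_S\sigma^2(r)\,\mu\pa{\mathrm{d}r}<\infty$ both makes $\mathcal{G}'$ well defined and supplies the dominating function for tightness.
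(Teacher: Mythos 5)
Your proposal is correct in substance, but it follows a genuinely different route from the paper's. The paper never proves tightness in $L^2\pa{\mu}$ at all: it invokes Theorem 2 of \citet{cremers1986}, whose hypotheses are checked pointwise in $t$ --- convergence of $\on{E}\xi_n^2(t)$, a.e.\ convergence of the finite-dimensional distributions of $\pa{\xi_n(t_1),\ldots,\xi_n(t_q)}$, and a $\mu$-integrable dominating function for $\on{E}\xi_n^2(t)$ --- and it obtains the finite-dimensional convergence by a Gaussian comparison: writing $\sum_{k=1}^nX_k(t)=\sum_{j\le n}z_{n,j}(t)\varepsilon_j(t)$ and applying the $\zeta_3$-smoothing result of \citet{rackauskas2011} (restated as Proposition \ref{prp:rs}) under the coefficient conditions \eqref{cond1} and \eqref{cond2}, verified in Proposition \ref{prp:on}. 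You instead use the generic Hilbert-space scheme --- convergence of the scalar projections $\langle n^{-H}S_n,g\rangle$ for a dense set of $g$, plus flat-concentration tightness extracted from $\on{E}\nm{n^{-H}S_n}_2^2\to\on{E}\nm{\mathcal G}_2^2$ and coordinatewise second-moment convergence --- and you replace the Gaussian comparison by a Lindeberg theorem for the independent array $W_{n,m}$. Both arguments rest on the same analytic core: the covariance asymptotics (your computation reproduces Proposition \ref{prp:cps}, including the constant $\br{\pa{2-\beta}\pa{3-\beta}}^{-1}$ with $\beta=d(s)+d(t)$) and a uniform-in-$n$, $\mu$-integrable bound on the normalized variances (the paper's Proposition \ref{prp:dom} together with the bound $c(t)\le\br{1-d(t)}^{-1}+\br{2d(t)-1}^{-1}$ of Remark \ref{c(t)}; your third paragraph). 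What the paper's route buys is economy: the two cited results absorb all of the functional-analytic work, and its finite-dimensional step needs only the pointwise assumption $\on{E}\varepsilon_0^2(t)<\infty$. What your route buys is self-containedness --- no appeal to \citet{cremers1986} or to Proposition 4.1 of \citet{rackauskas2011} --- at the cost of the basis expansion, the dense class of test functions, and the requirement $\on{E}\nm{\varepsilon_0}_2^2=\int_S\sigma^2\,\mathrm{d}\mu<\infty$ already at the finite-dimensional stage (harmless here, since $\pa{1-d}\pa{2d-1}\le1/8$ makes this a consequence of \eqref{eq:intclt}).

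One caveat you should repair when writing the argument out in full: for a general independent triangular array, Lindeberg's condition does \emph{not} reduce to the negligibility $\max_m\on{E}W_{n,m}^2\to0$; that implication fails without a uniform integrability ingredient. It does hold here because of the special structure $W_{n,m}=\int_S h_{n,m}(t)\varepsilon_m(t)\,\mu\pa{\mathrm{d}t}$ with $h_{n,m}(t)=n^{-\br{3/2-d(t)}}b_{n,m}(t)g(t)$: Cauchy--Schwarz gives $\abs{W_{n,m}}\le\nm{h_{n,m}}_2\nm{\varepsilon_m}_2$, where the $\nm{\varepsilon_m}_2$ are i.i.d.\ and square-integrable, whence, with $\delta_n\mathrel{\mathop:}=\max_m\nm{h_{n,m}}_2$,
\begin{equation*}
\sum_m\on{E}\br{W_{n,m}^2\,\chi_{\cb{\abs{W_{n,m}}>\epsilon}}}\le\on{E}\br{\nm{\varepsilon_0}_2^2\,\chi_{\cb{\nm{\varepsilon_0}_2>\epsilon/\delta_n}}}\cdot\sum_m\nm{h_{n,m}}_2^2.
\end{equation*}
The first factor vanishes as $n\to\infty$ because $\delta_n\to0$ and $\on{E}\nm{\varepsilon_0}_2^2<\infty$, while the second stays bounded; these are exactly the projected analogues of conditions \eqref{cond1} and \eqref{cond2}, so your Lindeberg verification ultimately checks the same two coefficient bounds that the paper verifies in Proposition \ref{prp:on}.
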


\begin{remark}
If the essential infimum of $d\pa{t}$ is greater than 1,  that is, if
\begin{equation*}
\sup\cb{x\in\mathbb{R}:\mu\pa{\cb{t:d\pa{t}<x}}=0}>1,
\end{equation*}
then we can use Theorem 1 of \citet{rackauskas2010} to show that the central limit theorem holds for the partial sums of a similar second-order stationary sequence of  $L^2\pa{\mu}$-valued random elements. Suppose that $\cb{\psi_k}$ is a sequence of independent and identically distributed random elements of $L^2\pa{\mu}$. Let us define a sequence of $L^2\pa{\mu}$-valued random elements by setting for each $k\in\mathbb{Z}$
\begin{equation}
\label{eq:lsqre}
Y_k\mathrel{\mathop:}=\sum_{j=0}^\infty A_j\psi_{k-j},
\end{equation}
where $A_j$ is a sequence of multiplication operators with the following expression
\begin{equation*}
A_j\psi_{k-j}\pa{t}=\pa{j+1}^{-d(t)}\psi_{k-j}(t)
\end{equation*}
for each $t\in S$. The sequence \eqref{eq:lsqre} is essentially similar to the sequence $\cb{X_k}$ of random processes \eqref{eq:series of rv}. According to Theorem~1 of \citet{rackauskas2010}, $n^{-1/2}\sum_{k=1}^nY_k$ converges in distribution to a Gaussian random element of $L^2\pa{\mu}$ if $n^{-1/2}\sum_{k=1}^n\psi_{k}$ converges in distribution to a Gaussian random element of $L^2\pa{\mu}$ (see \citet{rackauskas2010} for more details).
\end{remark}

\begin{proof}[Proof of Proposition \ref{prp:clt}]
The proof is based on a part of Theorem 2 of \citet{cremers1986}. We provide the proof in several steps. To establish that the sequence $\cb{\xi_n}$ weakly converges to $\xi$, we need to show that the following is true:
\begin{enumerate}[(a)]
\item\label{partA} for each $t\in S$ $\on{E}\xi_n^2(t)\to\on{E}\xi^2(t)$ as $n\to\infty$;
\item\label{partB} the finite-dimensional distributions of $\xi_n$ converge weakly to those of the $\xi$ almost everywhere;
\item\label{partC} for each $t\in S$ and $n\in\mathbb{N}$ $\on{E}\xi_n^2(t)\le f(t)$, where $f$ is a non-negative $\mu$-integrable function.
\end{enumerate}

We begin by proving part \eqref{partA}. We show that for each $t\in S$ the sequences $\on{E}\br{n^{-\br{3/2-d(t)}}\sum_{k=1}^nX_k(t)}^2$ and $\on{E}\br{(\sqrt n\ln n)^{-1}\sum_{k=1}^nX_k(t)}^2$ converge to $\on{E}\mathcal{G}^2(t)$ and $\on{E}\mathcal{G}'^2(t)$ respectively.

The growth rate of the cross-covariance of the partial sums of the sequences $\cb{X_k(s)}$ and $\cb{X_k(t)}, s,t\in S$, is established in Proposition \ref{prp:cps}.
\begin{prp}
\label{prp:cps}
If $1/2<d(s)<1$ and $1/2<d(t)<1$, then
\begin{equation*}
\on{E}\br{\pa{\sum_{k=1}^nX_k(s)}\pa{\sum_{k=1}^nX_k(t)}}\sim\frac{\br{c(s,t)+c(t,s)}\sigma(s,t)}{\pa{2-\br{d(s)+d(t)}}\pa{3-\br{d(s)+d(t)}}}\cdot n^{3-\br{d(s)+d(t)}}\quad\text{as}\quad n\to\infty,
\end{equation*}
where $c(s,t)$ is function \eqref{eq:function c}.

If $d(s)=1$ and $d(t)=1$, then
\begin{equation*}
\on{E}\br{\pa{\sum_{k=1}^nX_k(s)}\pa{\sum_{k=1}^nX_k(t)}}\sim\sigma(s,t)\cdot n\ln^2n.
\end{equation*}
\end{prp}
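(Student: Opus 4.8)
The plan is to reduce the double sum to a single lag-weighted sum and then feed in the autocovariance asymptotics of Proposition \ref{prp:asym cov}. First, by linearity of expectation the product equals $\sum_{k=1}^{n}\sum_{l=1}^{n}\on{E}\br{X_k(s)X_l(t)}$, and by the i.i.d.\ structure of $\cb{\varepsilon_k}$ the cross-covariance $\on{E}\br{X_k(s)X_l(t)}$ depends only on the lag $h=l-k$; denote it $\gamma(h)$. For $h\ge0$ it coincides with the expression in \eqref{eq:cross-cov}, and for $h<0$ it equals the same quantity with the roles of $s$ and $t$ interchanged, namely $\on{E}\br{X_0(t)X_{-h}(s)}$. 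Collecting the $n^2$ pairs $(k,l)$ by their common lag $h$, of which there are exactly $n-\abs h$, gives
\begin{equation*}
\on{E}\br{\pa{\sum_{k=1}^nX_k(s)}\pa{\sum_{k=1}^nX_k(t)}}=\sum_{h=-(n-1)}^{n-1}\pa{n-\abs{h}}\gamma(h).
\end{equation*}

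Next, write $D\mathrel{\mathop:}=d(s)+d(t)$, so that $1<D<2$ in the first case. Proposition \ref{prp:asym cov} gives $\gamma(h)\sim c(s,t)\sigma(s,t)\,h^{1-D}$ for the positive lags and $\gamma(-h)\sim c(t,s)\sigma(s,t)\,h^{1-D}$ for the negative lags as $h\to\infty$; the two constants differ because the integral \eqref{eq:function c} is not symmetric in $s$ and $t$. The heart of the argument is the elementary summation lemma: if $a_h\sim C\,h^{\alpha}$ with $-1<\alpha<0$, then
\begin{equation*}
\sum_{h=1}^{n-1}\pa{n-h}a_h\sim\frac{C}{\pa{\alpha+1}\pa{\alpha+2}}\,n^{\alpha+2}.
\end{equation*}
I would prove this by splitting $\sum(n-h)a_h=n\sum a_h-\sum h\,a_h$ and invoking the two power-sum asymptotics $\sum_{h=1}^{n}h^{\beta}\sim n^{\beta+1}/(\beta+1)$, valid for $\beta>-1$; the passage from $a_h\sim C h^{\alpha}$ to $\sum_{h=1}^{n}a_h\sim C\,n^{\alpha+1}/(\alpha+1)$ is a Ces\`aro-type averaging statement with weights $h^{\alpha}$ whose partial sums diverge.

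Applying this with $\alpha=1-D$ separately to the positive- and negative-lag sums and adding yields the factor $c(s,t)+c(t,s)$ together with the exponent $\alpha+2=3-D$ and the denominator $(\alpha+1)(\alpha+2)=(2-D)(3-D)$, which is exactly the claimed constant. The diagonal term $h=0$ contributes only $O(n)$ and is negligible against $n^{3-D}$ because $D<2$ forces $3-D>1$. For the critical case $d(s)=d(t)=1$ the lag covariance is symmetric, $\gamma(h)=\gamma(-h)\sim\sigma(s,t)\,h^{-1}\ln h$, so up to the negligible diagonal term the sum reduces to $2\sum_{h=1}^{n-1}(n-h)\gamma(h)$; the same splitting with $\sum_{h=1}^{n}h^{-1}\ln h\sim\tfrac12\ln^2 n$ and $\sum_{h=1}^{n}\ln h\sim n\ln n$ then gives leading behaviour $2\sigma(s,t)\br{n\cdot\tfrac12\ln^2 n}=\sigma(s,t)\,n\ln^2 n$, the contribution of $\sum h\,\gamma(h)$ being of the lower order $n\ln n$.

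The main obstacle is justifying the summation lemma rigorously, since the asymptotic $\gamma(h)\sim C\,h^{\alpha}$ holds only for large $h$ while the weight $n-h$ is itself $n$-dependent, so one cannot quote a fixed-sequence Tauberian theorem directly. I would resolve this by isolating the two genuine single power sums $\sum a_h$ and $\sum h\,a_h$, each of which reduces to the standard fact that a weighted average of a convergent sequence converges to the same limit when the weight partial sums diverge; the finitely many small-$h$ terms, where the asymptotic has not yet taken hold, contribute only $O(n)$ in total and are swamped by the leading $n^{3-D}$ (respectively $n\ln^2 n$) order.
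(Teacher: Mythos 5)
Your proposal is correct and follows essentially the same route as the paper: your lag-weighted identity $\sum_{h=-(n-1)}^{n-1}(n-\abs{h})\gamma(h)$ is just a compact rewriting of the paper's decomposition into the diagonal term $n\on{E}\br{X_0(s)X_0(t)}$ plus the two triangular double sums, and both arguments then reduce to the same two single sums $n\sum_{k<n}\gamma(k)$ and $\sum_{k<n}k\,\gamma(k)$, evaluated via Proposition \ref{prp:asym cov} together with the standard power-sum (Ces\`aro-type) asymptotics. Your explicit attention to the summation lemma and to the asymmetry $c(s,t)\ne c(t,s)$ makes rigorous exactly the steps the paper states without proof.
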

\begin{proof}
The cross-covariance of the partial sums of the sequences $\cb{X_k(s)}$ and $\cb{X_k(t)}$ has the following expression
\begin{multline}
\label{eq:pscov}
\on{E}\br{\pa{\sum_{k=1}^nX_k(s)}\pa{\sum_{k=1}^nX_k(t)}}=n\on{E}\br{X_0(s)X_0(t)}\\+\sum_{k=1}^{n-1}\sum_{l=k+1}^n\on{E}\br{X_k(s)X_l(t)}+\sum_{k=1}^{n-1}\sum_{l=k+1}^n\on{E}\br{X_k(t)X_l(s)}.
\end{multline}
Since
\begin{equation*}
\sum_{k=1}^{n-1}\sum_{l=k+1}^n\on{E}\br{X_k(s)X_l(t)}=n\sum_{k=1}^{n-1}\on{E}\br{X_0(s)X_k(t)}-\sum_{k=1}^{n-1}k\on{E}\br{X_0(s)X_k(t)},
\end{equation*}
we can use the results of Proposition \ref{prp:asym cov} to obtain the following asymptotic relations: if $1/2<d(s)<1$ and $1/2<d(t)<1$, then
\begin{align*}
\sum_{k=1}^{n-1}\on{E}\br{X_0(s)X_k(t)}
    &\sim\frac{c(s,t)\sigma(s,t)}{2-\br{d(s)+d(t)}}\cdot n^{2-\br{d(s)+d(t)}},\\
\sum_{k=1}^{n-1}k\on{E}\br{X_0(s)X_k(t)}
    &\sim\frac{c(s,t)\sigma(s,t)}{3-\br{d(s)+d(t)}}\cdot n^{3-\br{d(s)+d(t)}};
\end{align*}
if $d(s)=1$ and $d(t)=1$, then
\begin{align*}
\sum_{k=1}^{n-1}\on{E}\br{X_0(s)X_k(t)}
    &\sim\frac{\sigma(s,t)}2\cdot \ln^2n,\\
\sum_{k=1}^{n-1}k\on{E}\br{X_0(s)X_k(t)}
    &\sim\sigma(s,t)\cdot n\ln n.\qedhere
\end{align*}
\end{proof}
\begin{remark}
\label{remark:var}
The growth rate of the variance of the partial sums of the sequence $\cb{X_k(t)}$ is the following: if $1/2<d(t)<1$, then
\begin{align*}
\on{E}\br{\sum_{k=1}^nX_k(t)}^2&\sim\frac{c(t)\sigma^2(t)}{\br{1-d(t)}\br{3-2d(t)}}\cdot n^{3-2d(t)};
\intertext{if $d(t)=1$, then}
\on{E}\br{\sum_{k=1}^nX_k(t)}^2&\sim\sigma^2(t)\cdot n\ln^2n.
\end{align*}
\end{remark}
Remark \ref{remark:var} completes the proof of part \eqref{partA}.

Now we move on to the proof of part \eqref{partB} and show that the finite dimensional distributions of $n^{-H}\sum_{k=1}^nX_k$ and $(\sqrt{n}\ln n)^{-1}\sum_{k=1}^nX_k$ converge to those of $\mathcal G$ and $\mathcal G'$ respectively. In order to prove the convergence of finite dimensional distributions we investigate a sequence of random vectors
\begin{equation}
\label{eq:rv}
\left(\begin{array}{ccc}b_n^{-1}(t_1)\sum_{k=1}^nX_k(t_1) & \ldots & b_n^{-1}(t_q)\sum_{k=1}^nX_k(t_q)\end{array}\right),
\end{equation}
where $t_1,\ldots,t_q\in S$ and
\begin{equation*}
b_n(t)\mathrel{\mathop:}=
\begin{cases}
n^{3/2-d(t)},&1/2<d(t)<1;\\
\sqrt{n}\ln n,&d(t)=1.
\end{cases}
\end{equation*}

The sum of dependent random variables $\sum_{k=1}^nX_k(t)$ can be expressed as a series of independent random variables: if $n\ge2$, then we have the following identity
\begin{equation*}
\sum_{k=1}^nX_k(t)=\sum_{j=-\infty}^nz_{n,j}(t)\varepsilon_j(t),
\end{equation*}
where
\begin{equation*}
z_{n,j}(t)\mathrel{\mathop:}=
\begin{cases}
\sum_{k=1}^{n-j+1}k^{-d(t)},&2\le j\le n;\\
\sum_{k=1}^n\pa{k-j+1}^{-d(t)},& j<2.
\end{cases}
\end{equation*}
By denoting
\begin{equation*}
\varepsilon_j^{(q)}\mathrel{\mathop:}=\left(\begin{array}{ccc}\varepsilon_j(t_1) & \ldots & \varepsilon_j(t_q)\end{array}\right)^{\mathrm{T}}
\end{equation*}
and
\begin{equation*}
B_{n,j}\mathrel{\mathop:}=\on{diag}\left(\begin{array}{ccc}b_n^{-1}(t_1)z_{n,j}(t_1) & \ldots & b_n^{-1}(t_q)z_{n,j}(t_q)\end{array}\right),
\end{equation*}
we can express the sequence of random vectors \eqref{eq:rv} compactly as
\begin{equation*}
\sum_{j=-\infty}^nB_{n,j}\varepsilon_j^{(q)}=\left(\begin{array}{c}\sum_{j=-\infty}^n\br{b_n^{-1}(t_1)z_{n,j}(t_1)}\varepsilon_j(t_1) \\\hdotsfor{1} \\\sum_{j=-\infty}^n\br{b_n^{-1}(t_q)z_{n,j}(t_q)}\varepsilon_j(t_q)\end{array}\right).
\end{equation*}
Using the fact that the operator norm of a diagonal matrix is the largest entry in absolute value, we obtain the operator norm of the matrix $B_{n,j}$
\begin{equation*}
\nm{B_{n,j}}_M=\max_{1\le i\le q}\abs{b_n^{-1}(t_i)z_{n,j}(t_i)}.
\end{equation*}

Now suppose that
\begin{equation*}
\gamma_j^{(q)}\mathrel{\mathop:}=\left(\begin{array}{ccc}\gamma_j(t_1) & \ldots & \gamma_j(t_q)\end{array}\right)^{\mathrm{T}}
\end{equation*}
is a zero mean normal random vector with the same covariance matrix as the vector $\varepsilon_j^{(q)}$. The sequence $\sum_{j=-\infty}^nB_{n,j}\gamma_j^{(q)}$ converges in distribution to a normal random vector if and only if the sequence of covariance matrices converges. Clearly, it follows from the results of Proposition \ref{prp:cps}, that the sequence of covariance matrices converges.

To prove that the finite dimensional distributions converge weakly, we apply Proposition 4.1 of \citet{rackauskas2011}, which establishes that under certain conditions the sequences $\sum_{j=-\infty}^nB_{n,j}\varepsilon_j^{(q)}$ and $\sum_{j=-\infty}^nB_{n,j}\gamma_j^{(q)}$ have the same limiting behaviour in the sense that if one converges in distribution then so does the other and their limits coincide.

We consider for $q$-dimensional random vectors $U$ and $V$ the distance
\begin{equation*}
\zeta_3\pa{U,V}\mathrel{\mathop:}=\sup_{f\in\mathcal{F}_3}\abs{\on{E}f\pa{U}-\on{E}f\pa{V}},
\end{equation*}
where $\mathcal{F}_3$ is the set of three times Frechet differentiable functions $f:\mathbb{R}^q\to\mathbb{R}$ such that
\begin{equation*}
\sup_{x\in \mathbb{R}^q}\abs{f^{(j)}(x)}\le1,\quad\text{for}\ j=0,\ldots,3.
\end{equation*}

For the sake of convenience, we state Proposition 4.1 of \citet{rackauskas2011} here.
\begin{prp}
\label{prp:rs}
If the following two conditions
\begin{equation}
\label{cond1}
\lim_{n\to\infty}\sup_{j\in\mathbb{Z}}\nm{B_{n,j}}_M=0
\end{equation}
and
\begin{equation}
\label{cond2}
\limsup_{n\to\infty}\sum_{j\in\mathbb{Z}}\nm{B_{n,j}}_M^2<\infty.
\end{equation}
are satisfied, then
\begin{equation*}
\lim_{n\to\infty}\zeta_3\pa{\sum_{j=-\infty}^nB_{n,j}\varepsilon_j^{(q)},\sum_{j=-\infty}^nB_{n,j}\gamma_j^{(q)}}=0.
\end{equation*}
\end{prp}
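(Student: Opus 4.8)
The plan is to prove this by the classical Lindeberg replacement argument, exploiting the fact that $\zeta_3$ is built from functions whose derivatives up to order three are bounded by $1$. Fix $f\in\mathcal{F}_3$. Swapping one summand at a time, I would write the difference $\on{E}f\pa{\sum_{j\le n} B_{n,j}\varepsilon_j^{(q)}}-\on{E}f\pa{\sum_{j\le n} B_{n,j}\gamma_j^{(q)}}$ as a telescoping sum $\sum_{j\le n}\Delta_j$, where $\Delta_j=\on{E}f\pa{W_j+B_{n,j}\varepsilon_j^{(q)}}-\on{E}f\pa{W_j+B_{n,j}\gamma_j^{(q)}}$ and $W_j$ collects all the other (mutually independent) terms, with the $\gamma$'s already inserted below $j$ and the $\varepsilon$'s retained above. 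Since the index set extends to $-\infty$, I would first run the argument on a finite block $-N\le j\le n$, using the $L^2$-convergence of the tails guaranteed by \eqref{cond2}, and then let $N\to\infty$.

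Next I would Taylor-expand $f$ about $W_j$ to third order. Writing $h$ for either $B_{n,j}\varepsilon_j^{(q)}$ or $B_{n,j}\gamma_j^{(q)}$ and $R_3(W_j,h)\mathrel{\mathop:}= f(W_j+h)-f(W_j)-f'(W_j)h-\tfrac12 f''(W_j)[h,h]$, the independence of $W_j$ from $\varepsilon_j^{(q)}$ and $\gamma_j^{(q)}$ makes the zeroth-order terms cancel, the first-order terms vanish because both vectors are centred, and the second-order terms cancel because $\gamma_j^{(q)}$ is chosen with the same covariance as $\varepsilon_j^{(q)}$. Hence $\Delta_j=\on{E}R_3\pa{W_j,B_{n,j}\varepsilon_j^{(q)}}-\on{E}R_3\pa{W_j,B_{n,j}\gamma_j^{(q)}}$, so everything reduces to estimating the two remainders. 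For the Gaussian term, $\nm{f'''}\le1$ gives $\abs{R_3}\le\tfrac16\nm{h}^3$, and since $B_{n,j}\gamma_j^{(q)}$ is Gaussian with covariance controlled by $\nm{B_{n,j}}_M^2$ times the fixed, finite covariance of $\varepsilon_0^{(q)}$, its third absolute moment is at most $C\nm{B_{n,j}}_M^3$; summing and using $\sum_j\nm{B_{n,j}}_M^3\le\pa{\sup_j\nm{B_{n,j}}_M}\sum_j\nm{B_{n,j}}_M^2$ together with \eqref{cond1} and \eqref{cond2} sends this contribution to $0$.

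The main obstacle is the $\varepsilon$ term, since the hypotheses provide only second moments of $\varepsilon_0$, so the naive third-moment bound $\on{E}\nm{B_{n,j}\varepsilon_j^{(q)}}^3$ is unavailable. Here I would use the two-sided estimate $\abs{R_3}\le\min\pa{\nm{h}^2,\tfrac16\nm{h}^3}$, valid because $\nm{f''}\le1$ also forces $\abs{R_3}\le\nm{h}^2$. With $h=B_{n,j}\varepsilon_j^{(q)}$ and $\nm{h}\le\nm{B_{n,j}}_M\nm{\varepsilon_j^{(q)}}$ this yields $\abs{\on{E}R_3}\le\nm{B_{n,j}}_M^2\,\on{E}\br{\nm{\varepsilon_0^{(q)}}^2\min\pa{1,\tfrac16\nm{B_{n,j}}_M\nm{\varepsilon_0^{(q)}}}}$. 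Since the minimum is increasing in its argument, replacing $\nm{B_{n,j}}_M$ by $\sup_j\nm{B_{n,j}}_M$ produces a $j$-independent upper bound $\rho_n\mathrel{\mathop:}=\on{E}\br{\nm{\varepsilon_0^{(q)}}^2\min\pa{1,\tfrac16\pa{\sup_j\nm{B_{n,j}}_M}\nm{\varepsilon_0^{(q)}}}}$, and condition \eqref{cond1} together with dominated convergence (the integrand is dominated by the integrable $\nm{\varepsilon_0^{(q)}}^2$) shows $\rho_n\to0$.

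Factoring $\rho_n$ out of the sum leaves $\rho_n\sum_j\nm{B_{n,j}}_M^2$, which vanishes by \eqref{cond2}. Combining the Gaussian and non-Gaussian remainder estimates therefore gives $\sum_{j\le n}\abs{\Delta_j}\to0$ uniformly over $f\in\mathcal{F}_3$, that is, $\zeta_3\pa{\sum_{j=-\infty}^nB_{n,j}\varepsilon_j^{(q)},\sum_{j=-\infty}^nB_{n,j}\gamma_j^{(q)}}\to0$, as required. The only delicate points beyond the moment issue are justifying the passage to the infinite index set and checking that the Fréchet-derivative bounds defining $\mathcal{F}_3$ translate into the operator-norm estimates on the multilinear remainders used above; both are routine given \eqref{cond1} and \eqref{cond2}.
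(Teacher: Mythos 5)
The paper does not actually prove this proposition: it is quoted, for convenience, as Proposition 4.1 of \citet{rackauskas2011}, so there is no internal argument to compare yours against. Your Lindeberg replacement proof is correct and is essentially the standard route by which such $\zeta_3$-comparison results are established. The decisive points all check out: the zeroth-order terms cancel by telescoping, the first-order terms by centring and independence, the second-order terms because $\gamma_j^{(q)}$ is chosen with the covariance of $\varepsilon_j^{(q)}$; the Gaussian remainder is controlled by $\on{E}\nm{B_{n,j}\gamma_j^{(q)}}^3\le C\nm{B_{n,j}}_M^3$ (valid by equivalence of Gaussian moments) together with $\sum_j\nm{B_{n,j}}_M^3\le\pa{\sup_j\nm{B_{n,j}}_M}\sum_j\nm{B_{n,j}}_M^2$, which vanishes by \eqref{cond1} and \eqref{cond2}; and, crucially, your two-sided bound $\abs{R_3}\le\min\pa{\nm{h}^2,\tfrac16\nm{h}^3}$ is exactly the device needed to cope with the $\varepsilon$'s having only finite second moments (the same trick that proves the classical Lindeberg CLT without third moments), with \eqref{cond1} entering through the dominated-convergence argument for $\rho_n\to0$ and \eqref{cond2} supplying the factor $\sum_j\nm{B_{n,j}}_M^2$ that $\rho_n$ multiplies. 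Since every estimate depends on $f$ only through the uniform bounds defining $\mathcal{F}_3$, the conclusion is indeed uniform in $f$, i.e.\ a bound on $\zeta_3$.

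Two small points deserve explicit attention in a write-up, though neither affects correctness. First, for each fixed $n$ the series $\sum_{j\le n}B_{n,j}\varepsilon_j^{(q)}$ must be well defined, which requires $\sum_j\nm{B_{n,j}}_M^2<\infty$ for that $n$; condition \eqref{cond2} only controls the limsup, so this finiteness has to be assumed (it holds in the paper's application, where the $z_{n,j}(t)$ are square summable in $j$), and it is also what legitimizes your truncation step $-N\le j\le n$ via the Lipschitz bound $\abs{\on{E}f(S)-\on{E}f(S_N)}\le\pa{\on{E}\nm{S-S_N}^2}^{1/2}$. Second, the telescoping requires the families $\cb{\varepsilon_j}$ and $\cb{\gamma_j}$ to be realized as mutually independent, which can always be arranged on a product probability space; this is worth one sentence since $\zeta_3$ depends only on the marginal distributions, so no generality is lost.
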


Propositon \ref{prp:on} establishes that the sequence of operator norms of the matrices $B_{n,j}$ satisfies two properties that are needed to apply Proposition \ref{prp:rs}.
\begin{prp}
\label{prp:on}
If $\frac{1}{2}<d(t)\le1$ for each $t\in S$, then both of conditions \eqref{cond1} and \eqref{cond2} are satisfied.
\end{prp}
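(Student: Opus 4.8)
The plan is to reduce everything to a single coordinate and then read the two conditions off the asymptotics already recorded in Remark \ref{remark:var}. Since $t_1,\ldots,t_q$ is a finite collection and, as noted above, $\nm{B_{n,j}}_M=\max_{1\le i\le q}\abs{b_n^{-1}(t_i)z_{n,j}(t_i)}$, it suffices to prove, for each fixed $t$ with $1/2<d(t)\le1$, the two coordinatewise statements $\sup_{j}\abs{b_n^{-1}(t)z_{n,j}(t)}\to0$ and $\limsup_n\sum_{j}\pa{b_n^{-1}(t)z_{n,j}(t)}^2<\infty$. Condition \eqref{cond1} then follows because $\sup_j\max_i=\max_i\sup_j$ and a maximum of finitely many null sequences is null; condition \eqref{cond2} follows from $\nm{B_{n,j}}_M^2\le\sum_{i=1}^q\pa{b_n^{-1}(t_i)z_{n,j}(t_i)}^2$ together with the finiteness of $q$.

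For condition \eqref{cond1} I would first bound $z_{n,j}(t)$ uniformly in $j$. In the branch $2\le j\le n$ the sum has at most $n$ terms dominated by $k^{-d(t)}$, and in the branch $j<2$ the inequality $k-j+1\ge k$ gives $\pa{k-j+1}^{-d(t)}\le k^{-d(t)}$; in both cases $z_{n,j}(t)\le\sum_{k=1}^n k^{-d(t)}$ (and $z_{n,j}(t)=0$ for $j>n$). Hence $\sup_j\abs{b_n^{-1}(t)z_{n,j}(t)}\le b_n^{-1}(t)\sum_{k=1}^n k^{-d(t)}$. Approximating the partial sum by an integral gives $\sum_{k=1}^n k^{-d(t)}\sim n^{1-d(t)}/\br{1-d(t)}$ when $1/2<d(t)<1$ and $\sim\ln n$ when $d(t)=1$. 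Dividing by $b_n(t)=n^{3/2-d(t)}$, respectively $b_n(t)=\sqrt n\ln n$, the exponents cancel and the bound is of order $n^{-1/2}$ in both regimes, which tends to zero.

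For condition \eqref{cond2} I would exploit the representation $\sum_{k=1}^n X_k(t)=\sum_{j=-\infty}^n z_{n,j}(t)\varepsilon_j(t)$: since the $\varepsilon_j(t)$ are independent and identically distributed with variance $\sigma^2(t)$, the variance of the partial sum equals $\sigma^2(t)\sum_{j=-\infty}^n z_{n,j}^2(t)$. Thus the deterministic quantity $\sum_j z_{n,j}^2(t)$ is, up to the factor $\sigma^2(t)$, exactly the variance whose growth is given in Remark \ref{remark:var}; the factor cancels, so $\sum_j z_{n,j}^2(t)\sim c(t)n^{3-2d(t)}/\br{\pa{1-d(t)}\pa{3-2d(t)}}$ for $1/2<d(t)<1$ and $\sim n\ln^2 n$ for $d(t)=1$ (this also follows directly from the same integral estimates, so no positivity of $\sigma^2(t)$ is needed). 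Since $b_n^2(t)$ equals precisely $n^{3-2d(t)}$, respectively $n\ln^2 n$, the normalised sum $b_n^{-2}(t)\sum_j z_{n,j}^2(t)$ converges to a finite constant, so its $\limsup$ is finite.

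The computations are routine once Remark \ref{remark:var} is available, and condition \eqref{cond2} is essentially immediate from it. I expect the only delicate point to be condition \eqref{cond1}: one must bound $z_{n,j}(t)$ by a quantity that is uniform over all $j\in\mathbb{Z}$ rather than over a fixed range, and then verify that the exponents cancel to produce exactly the decay $n^{-1/2}$ in both the $d(t)<1$ and the $d(t)=1$ cases, keeping the two definitions of $b_n(t)$ aligned so that the coordinatewise estimate remains uniform over the finite set $t_1,\ldots,t_q$.
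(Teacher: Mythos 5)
Your proof is correct, and for condition \eqref{cond2} it takes a genuinely different route from the paper's. For condition \eqref{cond1} the two arguments coincide: both reduce the supremum over $j$ to the value at $j=1$, namely $\sum_{k=1}^nk^{-d(t)}$, and cancel exponents against $b_n(t)$ to get a bound of order $n^{-1/2}$ in both regimes. For condition \eqref{cond2}, the paper estimates $\sum_{j=-\infty}^nz_{n,j}^2(t)$ directly: it splits it into $\sum_{j=2}^n\br{\sum_{k=1}^{n-j+1}k^{-d(t)}}^2$ plus $\sum_{j=0}^\infty\br{\sum_{k=1}^n\pa{k+j}^{-d(t)}}^2$, bounds the first by integral comparison, and identifies the normalized limit of the second as $\int_0^\infty\br{\int_0^1\pa{s+u}^{-d(t)}\,\mathrm{d}s}^2\mathrm{d}u$ via a Riemann-sum and interchange-of-limits argument. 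You instead observe that, by independence of the $\varepsilon_j(t)$, the deterministic quantity $\sigma^2(t)\sum_{j=-\infty}^nz_{n,j}^2(t)$ equals the variance $\on{E}\br{\sum_{k=1}^nX_k(t)}^2$, whose growth is already recorded in Remark \ref{remark:var} (established in part \eqref{partA}, so there is no circularity), and divide by $\sigma^2(t)$. This buys economy: it reuses the covariance asymptotics instead of redoing integral estimates, and it avoids the paper's limit-interchange step entirely. The one delicate point, which you yourself flag, is the degenerate case $\sigma^2(t_i)=0$: there the variance vanishes identically, Remark \ref{remark:var} says nothing, and the division fails, while conditions \eqref{cond1} and \eqref{cond2} remain nontrivial statements about the $z_{n,j}$. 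Your parenthetical patch is sound, because in the chain \eqref{eq:pscov}, Proposition \ref{prp:asym cov}, Proposition \ref{prp:cps}, Remark \ref{remark:var} the factor $\sigma(s,t)$ is a pure multiplicative spectator, so the same estimates yield $\sum_{j=-\infty}^nz_{n,j}^2(t)\sim c(t)n^{3-2d(t)}/\pa{\br{1-d(t)}\br{3-2d(t)}}$ (respectively $n\ln^2n$) with no reference to $\sigma$ --- equivalently, apply your variance identity to auxiliary innovations of unit variance. In a final write-up that observation should be the stated argument rather than a parenthesis, since it is what keeps your shortcut from requiring a positivity assumption the paper never imposes.
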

\begin{proof}
To prove that condition \eqref{cond1} holds, we first notice that
\begin{equation*}
\sup_{j\in\mathbb{Z}}\nm{B_{n,j}}_M=\max_{1\le i\le q}\abs{b_n^{-1}(t_i)z_{n,1}(t_i)}
\end{equation*}
and then we use the following asymptotic relations: if $\frac{1}{2}<d(t)<1$, then we have that
\begin{equation*}
\sum_{k=1}^nk^{-d(t)}\sim\frac{n^{1-d(t)}}{1-d(t)};
\end{equation*}
if $d(t)=1$, then we obtain
\begin{equation*}
\sum_{k=1}^nk^{-1}\sim\ln n.
\end{equation*}

We use the following expression to prove that condition \eqref{cond2} holds
\begin{equation*}
\sum_{j=-\infty}^nz_{n,j}^2(t)=\sum_{j=2}^n\br{\sum_{k=1}^{n-j+1}k^{-d(t)}}^2+\sum_{j=0}^\infty\br{\sum_{k=1}^n\pa{k+j}^{-d(t)}}^2.
\end{equation*}
Routine approximations of sums by integrals from above lead to the following inequalities: if $\frac{1}{2}<d(t)<1$, then we have that
\begin{equation*}
\sum_{j=2}^n\br{\sum_{k=1}^{n-j+1}k^{-d(t)}}^2\le\frac{1}{\br{1-d(t)}^2[3-2d(t)]}\br{n^{3-2d(t)}-1};
\end{equation*}
if $d(t)=1$, then we obtain
\begin{equation*}
\sum_{j=2}^n\br{\sum_{k=1}^{n-j+1}k^{-1}}^2\le(n-1)+n\ln^2n.
\end{equation*}

To prove that
\begin{equation*}
\lim_{n\to\infty}\frac{1}{n^{3-2d(t)}}\sum_{j=0}^\infty\br{\sum_{k=1}^n\pa{k+j}^{-d(t)}}^2<\infty
\end{equation*}
for $\frac{1}{2}<d(t)\le1$, we first divide the series in the expression above into two summands
\begin{equation}
\label{eq:series div}
\sum_{j=0}^\infty\br{\sum_{k=1}^n\pa{k+j}^{-d(t)}}^2=\br{\sum_{k=1}^nk^{-d(t)}}^2+\sum_{j=1}^\infty\br{\sum_{k=1}^n\pa{k+j}^{-d(t)}}^2
\end{equation}
and approximate the first summand on the right-hand side of the equation \eqref{eq:series div} by integral from above
\begin{equation*}
\sum_{k=1}^{n}k^{-d(t)}\le
\begin{cases}
\frac{n^{1-d(t)}}{1-d(t)}&\text{if}\quad\frac{1}{2}<d(t)<1;\\
1+\ln n &\text{if}\quad d(t)=1.
\end{cases}
\end{equation*}
We express the second summand on the right-hand side of equation \eqref{eq:series div} in the following way
\begin{equation*}
\frac{1}{n^{3-2d(t)}}\sum_{j=1}^\infty\br{\sum_{k=1}^{n}(k+j)^{-d(t)}}^2=\sum_{i=1}^\infty\frac{1}{n}\sum_{j=(i-1)n+1}^{in}\br{\frac{1}{n}\sum_{k=1}^{n}\pa{\frac{k}{n}+\frac{j}{n}}^{-d(t)}}^2
\end{equation*}
and the interchange of limits leads to the result
\begin{align*}
&\lim_{n\to\infty}\frac{1}{n^{3-2d(t)}}\sum_{j=1}^\infty\br{\sum_{k=1}^n(k+j)^{-d(t)}}^2=\int_0^\infty\!\br{\int_0^1\!(s+u)^{-d(t)}\,\mathrm{d}s}^2\,\mathrm{d}u<\infty.\qedhere
\end{align*}
\end{proof}
The proof of part \eqref{partB} is complete.

Finally, we prove part \eqref{partC}. Proposition \ref{prp:dom} establishes the existence of non-negative $\mu$-integrable functions that dominate the sequence of the variance of the partial sums.
\begin{prp}
\label{prp:dom}
Let $t\in S$. If $\frac12<d(t)<1$, then
\begin{equation}
\label{eq:domfun}
\on{E}\br{\frac{1}{n^{3/2-d(t)}}\sum_{k=1}^nX_k(t)}^2\le\sigma^2(t)\br{1+\frac{1}{2d(t)-1}}+\frac{\sigma^2(t)c(t)}{\br{1-d(t)}\br{3-2d(t)}},
\end{equation}
where $c(t)$ is the function \eqref{eq:function c}. If $d(t)=1$ for each $t\in S$, then
\begin{equation}
\label{eq:domfun1}
\on{E}\br{\frac{1}{\sqrt{n}\ln n}\sum_{k=1}^nX_k(t)}^2\le C\cdot \sigma^2(t),
\end{equation}
where $C$ is a positive constant.
\end{prp}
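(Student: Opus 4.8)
The plan is to bound the normalized variance by splitting the diagonal contribution from the off-diagonal (covariance) contribution and matching each of the two summands in \eqref{eq:domfun} to one of these pieces. Setting $s=t$ in the identity \eqref{eq:pscov} and using stationarity to collect the double sum, I would start from
\begin{equation*}
\on{E}\br{\sum_{k=1}^nX_k(t)}^2=n\on{E}X_0^2(t)+2\sum_{h=1}^{n-1}\pa{n-h}\on{E}\br{X_0(t)X_h(t)},
\end{equation*}
so that after dividing by $n^{3-2d(t)}$ the first summand of \eqref{eq:domfun} should come from the diagonal term and the second from the covariance term.

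For the diagonal term, setting $h=0$ and $s=t$ in \eqref{eq:cross-cov} gives $\on{E}X_0^2(t)=\sigma^2(t)\sum_{m=1}^\infty m^{-2d(t)}$, and comparing this series with $\int_1^\infty x^{-2d(t)}\,\mathrm{d}x$ yields $\on{E}X_0^2(t)\le\sigma^2(t)\br{1+\pa{2d(t)-1}^{-1}}$. Since $2d(t)-2<0$ we have $n^{2d(t)-2}\le1$ for every $n\ge1$, and therefore the normalized diagonal term $n^{2d(t)-2}\on{E}X_0^2(t)$ is at most $\sigma^2(t)\br{1+\pa{2d(t)-1}^{-1}}$, which is exactly the first summand in \eqref{eq:domfun}.

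For the covariance term I would use the upper inequality in \eqref{in:series} with $s=t$, namely $\on{E}\br{X_0(t)X_h(t)}\le\sigma^2(t)c(t)h^{1-2d(t)}$, so that the normalized covariance term is bounded by $\frac{2\sigma^2(t)c(t)}{n^{3-2d(t)}}\sum_{h=1}^{n-1}\pa{n-h}h^{1-2d(t)}$. The key observation is that, since $1-2d(t)<0$, the function $x\mapsto\pa{n-x}x^{1-2d(t)}$ is a product of two nonnegative decreasing functions on $(0,n)$, hence decreasing, and is integrable at the origin because $d(t)<1$; consequently it dominates the corresponding sum, $\sum_{h=1}^{n-1}\pa{n-h}h^{1-2d(t)}\le\int_0^n\pa{n-x}x^{1-2d(t)}\,\mathrm{d}x=\frac{n^{3-2d(t)}}{2\br{1-d(t)}\br{3-2d(t)}}$. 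Inserting this evaluation produces precisely the second summand $\frac{\sigma^2(t)c(t)}{\br{1-d(t)}\br{3-2d(t)}}$, and adding the two bounds gives \eqref{eq:domfun}.

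For $d(t)=1$ I would run the same decomposition: the diagonal term equals $n\sigma^2(t)\sum_{m=1}^\infty m^{-2}$, which after division by $n\ln^2n$ is bounded (for $n\ge2$) by a constant multiple of $\sigma^2(t)$, while for the covariance term I would use the logarithmic estimate obtained in the proof of Proposition \ref{prp:asym cov}, which gives $\on{E}\br{X_0(t)X_h(t)}\le\sigma^2(t)\br{\pa{h+1}^{-1}+h^{-1}\ln\pa{h+1}}$, and then crudely bound $\sum_{h=1}^{n-1}\pa{n-h}h^{-1}\ln\pa{h+1}$ and $\sum_{h=1}^{n-1}\pa{n-h}\pa{h+1}^{-1}$ by constant multiples of $n\ln^2n$ and $n\ln n$, so that after normalization by $n\ln^2n$ everything is absorbed into a single absolute constant $C$. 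The main technical point of the whole argument is this monotonicity-and-integrability step that legitimizes replacing the weighted sum by its integral uniformly in $n$; once the integrand is recognized as decreasing, the sharp constant $\br{2\pa{1-d(t)}\pa{3-2d(t)}}^{-1}$ drops out of an elementary integration, and the remaining inequalities ($n^{2d(t)-2}\le1$ and convergence of the comparison integrals) are routine.
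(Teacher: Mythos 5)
Your proof is correct. In the case $\frac12<d(t)<1$ it follows essentially the same route as the paper: the paper's entire proof of \eqref{eq:domfun} is the single sentence that one should set $s=t$ in \eqref{eq:pscov} and approximate the sums by integrals from above, and your argument --- the bound $n^{2d(t)-2}\on{E}X_0^2(t)\le\sigma^2(t)\br{1+\pa{2d(t)-1}^{-1}}$ for the diagonal term, the covariance bound $\on{E}\br{X_0(t)X_h(t)}\le\sigma^2(t)c(t)h^{1-2d(t)}$ coming from the upper inequality in \eqref{in:series}, and the comparison $\sum_{h=1}^{n-1}\pa{n-h}h^{1-2d(t)}\le\int_0^n\pa{n-x}x^{1-2d(t)}\,\mathrm{d}x=\frac{n^{3-2d(t)}}{2\br{1-d(t)}\br{3-2d(t)}}$, justified by monotonicity and integrability of the integrand --- is precisely the detail that sentence suppresses; your constants match \eqref{eq:domfun} exactly. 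The two arguments genuinely part ways only for $d(t)=1$. You prove \eqref{eq:domfun1} by direct estimation, using the logarithmic covariance bound $\sigma^2(t)\br{\pa{h+1}^{-1}+h^{-1}\ln\pa{h+1}}$ (which is indeed what the second display in the proof of Proposition \ref{prp:asym cov} gives, since $\int_1^\infty\br{y(y+1)}^{-1}\,\mathrm{d}y=\ln2$) together with crude bounds of order $n\ln^2n$ on the weighted sums. The paper instead observes that when $d(t)=1$ the series in \eqref{eq:cross-cov} with $s=t$ no longer depends on $t$, so that $\sigma^{-2}(t)\on{E}\br{\pa{\sqrt n\ln n}^{-1}\sum_{k=1}^nX_k(t)}^2$ is a numerical sequence independent of $t$, convergent by Remark \ref{remark:var}, hence bounded by some constant $C$. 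Your route is more self-contained (no appeal to the asymptotics of Remark \ref{remark:var}) and yields, in principle, an explicit constant; the paper's route is shorter and sidesteps the logarithmic sums entirely, at the price of a non-explicit $C$. Both are sound.
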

\begin{proof}
To establish the first inequality in Proposition \ref{prp:dom}, we set $s=t$ in expression \eqref{eq:pscov} and approximate the sums in expression \eqref{eq:pscov} by integrals from above.

The following reasoning leads to inequality \eqref{eq:domfun1}. We set $s=t$ in expression $\eqref{eq:pscov}$ to obtain an expression for the left-hand side of inequality \eqref{eq:domfun1}. Since $d(t)=1$ for each $t\in S$, by setting $s=t$ in expression  $\eqref{eq:cross-cov}$, we see that the only term in the expression of the left-hand side of inequality \eqref{eq:domfun1} that depends on $t$ is $\sigma^2(t)$. It follows that the sequence
\begin{equation*}
\frac1{\sigma^2(t)}\cdot\on{E}\br{\frac{1}{\sqrt{n}\ln n}\sum_{k=1}^nX_k(t)}^2
\end{equation*}
is a convergent sequence (see Remark \ref{remark:var}) which does not depend on $t$. So it is bounded by some positive constant, say $C$.\qedhere
\end{proof}
\begin{remark}
\label{c(t)}
We can easily obtain the following upper bound for the function $c(t)$,
\begin{align*}
c(t)\le\frac1{1-d(t)}+\frac1{2d(t)-1},\quad t\in S,
\end{align*}
and then we have the following inequality
\begin{equation*}
\frac{\sigma^2(t)c(t)}{\br{1-d(t)}\br{3-2d(t)}}\le\frac{\sigma^2(t)}{\br{1-d(t)}^2}+\frac{\sigma^2(t)}{\br{1-d(t)}\br{2d(t)-1}}.
\end{equation*}
If integrals \eqref{eq:intclt} are finite, then the right-hand side of the inequality \eqref{eq:domfun} is a $\mu$-integrable function.
\end{remark}
Remark \ref{c(t)} completes the proof of part \eqref{partC}. The proof of Proposition \ref{prp:clt} is complete.\qedhere
\end{proof}

\section*{Acknowledgements}
This research was supported in part by the Research Council of Lithuania, grant No. MIP-053/2012.

\bibliography{bibliography}

\end{document}